\newtheorem{theorem}{Theorem}[section]
\newtheorem{proposition}[theorem]{Proposition}
\newtheorem{lemma}[theorem]{Lemma}
\newtheorem{corollary}[theorem]{Corollary}
\theoremstyle{definition}
\newtheorem{remark}[theorem]{Remark}
\newcommand{\CO}{\mathcal O}
\newcommand{\PP}{\mathbb P}
\numberwithin{equation}{section}
\begin{document}

\baselineskip=15pt

\title[Principal co-Higgs bundles on $\PP^1$]{Principal co-Higgs bundles on $\PP^1$}

\author[I. Biswas]{Indranil Biswas}

\address{School of Mathematics, Tata Institute of Fundamental
Research, Homi Bhabha Road, Mumbai 400005, India}

\email{indranil@math.tifr.res.in}

\author[O. Garc\'ia-Prada]{Oscar Garc\'ia-Prada}

\address{ICMAT (Instituto de Ciencias Matem\'aticas), C/ Nicol\'as
Cabrera, no. 13--15, Campus Cantoblanco, 28049 Madrid, Spain}

\email{oscar.garcia-prada@icmat.es}

\author[J. Hurtubise]{Jacques Hurtubise}

\address{Department of Mathematics \& Statistics, McGill University, Burnside
Hall, 805 Sherbrooke Street West, Montr\'eal, QC, H3A 0B9, Canada}

\email{jacques.hurtubise@mcgill.ca}

\author[S. Rayan]{Steven Rayan}

\address{Department of Mathematics \& Statistics, University of Saskatchewan, McLean 
Hall, 106 Wiggins Road, Saskatoon, SK, S7N 5E6, Canada}

\email{rayan@math.usask.ca}

\subjclass[2010]{14H60, 14D20, 14D21}

\keywords{co-Higgs bundle, principal bundle, projective line, stability, simple roots}

\date{}

\begin{abstract}
For complex connected, reductive, affine, algebraic groups $G$, we give a Lie-theoretic characterization of the semistability of 
principal $G$-co-Higgs bundles on the complex projective line $\PP^1$ in terms of the simple roots of a Borel subgroup of $G$. We describe a stratification of the moduli space in terms of the Harder-Narasimhan type of the underlying bundle.
\end{abstract}

\maketitle

\section{Introduction}

Co-Higgs bundles arise principally in the study of generalized holomorphic bundles on generalized complex 
manifolds, in the sense of \cite{H1}, \cite{G1}. On the one hand, a generalized holomorphic bundle on a symplectic manifold 
is a bundle with flat connection. In contrast, a generalized holomorphic bundle on an ordinary complex manifold is 
not simply a holomorphic vector bundle, but rather a holomorphic vector bundle together with Higgs field-like data. 
If $E\,\longrightarrow\, X$ is the vector bundle, then this data is a holomorphic section $\phi$ of
the vector bundle $\mbox{End}(E)\otimes TX$, 
where $TX$ is the holomorphic tangent bundle of $X$, such that $\phi\wedge\phi\,=\,0$, as in \cite{H3,R0}. For 
Higgs bundles in the ordinary sense, $TX$ would be replaced by $T^*X$ --- hence the nomenclature \emph{co-Higgs}.

When $X$ is a complete algebraic curve or equivalently a compact Riemann surface, a co-Higgs
bundle on it is a holomorphic vector bundle 
$E\,\longrightarrow\, X$ together with a section $\phi\,\in\, H^0(X,\,\mbox{End}(E)\otimes K^{-1}_X)$, where 
$K^{-1}_X\,=\, TX$ is the anticanonical line bundle. As such, they provide examples of stable pairs in the sense of \cite{Nit,Mar,Bot}. For curves, the condition $\phi\wedge\phi\,=\,0$ becomes vacuous and slope stability implies that nontrivial examples exist only on the complex projective line $\mathbb P^1$ \cite{R2}. Restricting to $\PP^1$, where $K^{-1}_{\PP^1}\,\cong\, \CO_{\PP^1}(2)$ and holomorphic vector bundles decompose into direct sums of line bundles, one of the main results regarding the existence of semistable co-Higgs bundles is the following, proven by deforming a model co-Higgs field:\\

\begin{theorem}[{\cite[Theorem 6.1]{R2}}]\label{ths}
A holomorphic vector bundle of rank $r$
$$E\,\cong\,\CO_{\PP^1}(m_1)\oplus\CO_{\PP^1}(m_2)\oplus\cdots\oplus\CO_{\PP^1}(m_r)$$ over $\PP^1$, where $m_1\geq m_2\geq\cdots\geq m_r$, admits a 
semistable $\phi\in H^0(\PP^1,\mbox{\emph{End}}(E)\otimes\CO_{\PP^1}(2))$ if and only if $m_i\,\leq\, m_{i+1}+2$ for
all $1\,\leq\, i\,\leq\, r-1$. The 
generic $\phi$ leaves invariant no subbundle of $E$ whatsoever and is hence stable.
\end{theorem}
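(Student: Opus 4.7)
The plan is to handle the three claims separately: necessity of the slope-gap condition, existence of semistable $\phi$ under it, and the genericity assertion that a generic $\phi$ is stable.

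\emph{Necessity.} A morphism $\CO(a)\to \CO(b)\otimes\CO_{\PP^1}(2)$ is a section of $\CO_{\PP^1}(b-a+2)$, so it vanishes identically when $a>b+2$. If $m_i>m_{i+1}+2$ for some $i$, then for all $j\leq i<k$ we have $m_j-m_k\geq m_i-m_{i+1}>2$, which forces every $\phi$ to send $F:=\bigoplus_{j\leq i}\CO(m_j)$ into $F\otimes\CO_{\PP^1}(2)$. A direct slope comparison yields $\mu(F)>\mu(E)$, destabilizing $(E,\phi)$ regardless of the choice of $\phi$.

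\emph{Existence.} Assume $m_i\leq m_{i+1}+2$ for all $i$. I would construct a stable model $\phi_0$ by exploiting the hypothesis on both sides of the diagonal. For each consecutive pair, both $H^0(\CO_{\PP^1}(m_{i+1}-m_i+2))$ and $H^0(\CO_{\PP^1}(m_i-m_{i+1}+2))$ are nonzero (the first by hypothesis, the second because $m_i\geq m_{i+1}$). Choose nonzero $\alpha_i,\beta_i$ in these spaces, interpret them as maps $\CO(m_i)\to\CO(m_{i+1})(2)$ and $\CO(m_{i+1})\to\CO(m_i)(2)$, and let $\phi_0$ be the endomorphism with these $2(r-1)$ blocks off the diagonal and zero elsewhere. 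At a generic point $p\in\PP^1$ (avoiding the zeros of the chosen sections), the fibrewise matrix $\phi_0(p)$ is tridiagonal with all sub- and superdiagonal entries nonzero, hence cyclic. Using both the downward $\alpha_i$'s and the upward $\beta_i$'s, I would argue that a hypothetical proper $\phi_0$-invariant subbundle would have to be closed under index-shifts in both directions in the direct sum decomposition, pinning it at $0$ or $E$. Thus $(E,\phi_0)$ is stable, and openness of (semi)stability in the finite-dimensional space $H^0(\PP^1,\mathrm{End}(E)\otimes\CO(2))$ gives existence of (semi)stable $\phi$ in a Zariski-open neighborhood.

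\emph{Genericity.} For the last assertion I would stratify by the rank-degree invariant $\tau$ of potential subbundles. For each $\tau$, the Quot scheme $\mathrm{Quot}^{\tau}(E)$ is a finite-dimensional projective scheme, and the incidence locus
\[
\{(F,\phi):\phi(F)\subset F\otimes\CO(2)\}\subset \mathrm{Quot}^{\tau}(E)\times H^0(\PP^1,\mathrm{End}(E)\otimes\CO(2))
\]
is closed. A Riemann--Roch/dimension count, comparing the fibre dimension $h^0(\mathrm{Hom}(F,F)\otimes\CO(2))$ to the ambient $h^0(\mathrm{End}(E)\otimes\CO(2))$, shows that the projection to the second factor has image of positive codimension for each $\tau$. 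Boundedness of subbundles of a fixed bundle on $\PP^1$ leaves only countably many relevant $\tau$, and over $\mathbb{C}$ the complement of a countable union of proper closed subvarieties of an affine space is dense. A $\phi$ in this complement preserves no subbundle whatsoever and is in particular stable.

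\emph{Main obstacle.} The technical heart of the argument is verifying that the model $\phi_0$ has no invariant subbundle at all, not just no destabilizing one, and in particular ruling out subbundles not aligned with the splitting $\bigoplus\CO(m_i)$. The interplay of the downward $\alpha_i$ (which breaks the Harder--Narasimhan filtration) and the upward $\beta_i$ (which breaks its dual) is what makes this possible. I anticipate that a rank induction together with a careful fibrewise analysis at a cyclic point of $\phi_0$ will suffice, but the bookkeeping for repeated summands $\CO(m_i)=\CO(m_{i+1})$, and the boundedness claim underlying the Quot-scheme union in the genericity step, are the places where care is needed.
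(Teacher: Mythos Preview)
The paper does not give a self-contained proof of this statement; it is quoted from \cite{R2}, where it is ``proven by deforming a model co-Higgs field.'' The paper's own contribution is the generalization Theorem~\ref{thm1}, whose $G=\mathrm{GL}(r,\mathbb C)$ case recovers the first sentence of Theorem~\ref{ths} but not the second. Your necessity argument coincides with the $\mathrm{GL}(r)$ specialization of the paper's argument via the maximal parabolic $P_\chi$.

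For existence of a stable $\phi$, you and the paper choose different models. The paper's co-Higgs field lies in $\bigoplus_{\chi\in S_B}L_\chi^*\otimes TX\subset\mathrm{ad}(E_{B^1})\otimes TX$; for $\mathrm{GL}(r)$ with the standard Borel this is strictly subdiagonal, i.e.\ it uses only your $\alpha_i$ and none of the $\beta_i$. Stability is then obtained by invoking Hitchin \cite{HiG} (the principal nilpotent/Hitchin-section mechanism) or by the short parabolic argument at the end of the proof of Theorem~\ref{thm1}. Your tridiagonal $\phi_0$ is a legitimate alternative for stability; note, however, that the paper's subdiagonal field visibly preserves each $\bigoplus_{j\ge k}\CO_{\PP^1}(m_j)$, so it cannot witness the stronger ``no invariant subbundle'' clause --- your instinct that something two-sided is needed for that part is correct.

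There is a genuine gap in your genericity argument: the Quot-scheme dimension count does not force the image to have positive codimension. Already for $E=\CO_{\PP^1}^{\oplus 2}$ and rank-one subbundles $F\cong\CO_{\PP^1}(a)$ one has $\dim\mathrm{Quot}^{(1,a)}(E)=-2a+1$, while for generic such $F$ the linear space $\{\phi:\phi(F)\subset F(2)\}$ has dimension $\max(3,\,9+2a)$, since the restriction map $H^0(\mathrm{End}(E)(2))\to H^0(\mathrm{Hom}(F,E/F)(2))$ has rank $\min(9,\,-2a+3)$. (Your stated fibre dimension $h^0(\mathrm{End}(F)(2))$ is in any case too small: the correct fibre is $H^0$ of the full parabolic subalgebra preserving $F$, not just of $\mathrm{End}(F)$.) Thus for $a\le -4$ the incidence locus has dimension $-2a+4\ge 12=\dim H^0(\mathrm{End}(E)(2))$, and non-dominance of the projection cannot be read off from dimensions alone. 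It \emph{is} non-dominant --- a generic $\phi$ has irreducible spectral curve and hence no invariant line subbundle --- but that is exactly the input your count is missing. One standard repair is via the spectral correspondence: for generic $\phi$ the spectral cover in the total space of $\CO_{\PP^1}(2)$ is smooth and irreducible, so the associated torsion-free sheaf on it admits no saturated subsheaf supported on a proper subcurve, and hence $(E,\phi)$ has no proper invariant subbundle.
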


Just as one generalizes Higgs bundles to principal $G$-Higgs bundles, where $G$ is some reductive affine algebraic group, one can generalize
co-Higgs bundles in the same way. To be precise, we define a \emph{principal $G$-co-Higgs bundle} on $X$ to be a pair
$(E_G,\, \theta)$ in which $E_G\,\longrightarrow\,X$ is a holomorphic principal $G$--bundle and $\theta\,\in\, H^0(X,\,
\mbox{ad}(E_G)\otimes TX)$. The notions of stable and semistable principal bundles extend to
the context of $G$-co-Higgs bundles in a natural way; the definition of a (semi)stable $G$-co-Higgs
bundle is recalled in Section \ref{se2}.

With these notions in place, a natural question is how the above semistability result on $\PP^1$ transforms under the generalization 
to arbitrary reductive $G$. Fix a Borel subgroup $B\, \subset\, G$ and a maximal torus $T\, \subset\, B$,
where $G$ is a connected, reductive, affine algebraic group define over $\mathbb C$. We prove that the that
Harder--Narasimhan reduction of a holomorphic principal $G$--bundle $E_G$ over the complex projective line $\PP^1$ admits a
further holomorphic reduction of structure group to $T$ (Proposition \ref{prop2}).

We prove the following criterion (see Theorem \ref{thm1}):

\begin{theorem}\label{thmi}
Let $X$ stand for the complex projective line
$\PP^1$. Let $E_G$ be a holomorphic principal $G$--bundle $E_G$ on $X$, and let $E_T\, \subset\, E_G$ be
a holomorphic reduction of structure group to $T$ of the Harder--Narasimhan reduction of
$E_G$ (as mentioned above). There is a co-Higgs field
$$
\theta\, \in\, H^0(X,\, {\rm ad}(E_G)\otimes TX)
$$
such that $(E_G,\, \theta)$ is stable if and only if for every simple root
$$\chi\, :\, T\, \longrightarrow\, {\mathbb C}^*$$
with respect to $(B,\, T)$, the inequality
$${\rm degree}(L_\chi)\, \leq\, 2$$ holds, where 
$L_\chi$ is the line bundle on $X$ associated, via $\chi$, to $E_T$. In fact, if
${\rm degree}(L_\chi)\, \geq\, 3$ for some simple root $\chi\, :\, T\, \longrightarrow\, {\mathbb C}^*$
with respect to $B$, then there is no co-Higgs field
$$
\theta\, \in\, H^0(X,\, {\rm ad}(E_G)\otimes TX)
$$
such that $(E_G,\, \theta)$ is semistable.
\end{theorem}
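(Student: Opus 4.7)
The plan is to analyze $\theta\in H^0(X,\mathrm{ad}(E_G)\otimes TX)$ via the root-space decomposition of $\mathrm{ad}(E_G)$ induced by the $T$-reduction $E_T$,
\[
\mathrm{ad}(E_G)\;\cong\;(\mathfrak{t}\otimes\CO_X)\oplus\bigoplus_{\alpha\in R}L_\alpha,
\]
where $R$ is the root system of $(G,T)$, $L_\alpha$ is the line bundle on $X$ associated to the character $\alpha\colon T\to\mathbb{C}^*$, and $\deg L_\alpha=\sum_i n_i\deg L_{\chi_i}$ whenever $\alpha=\sum_i n_i\chi_i$ in the basis $\Pi$ of simple roots. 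Since $E_T$ extends the Harder--Narasimhan reduction by Proposition~\ref{prop2}, the cocharacter $c_1(E_T)\in X_*(T)$ is dominant, so $\deg L_{\chi}\ge 0$ for every simple root $\chi$; this inequality drives both halves of the argument.

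For the necessity direction, assume $\deg L_{\chi_0}\ge 3$ for some simple root $\chi_0$ and let $P_{\chi_0}\supset B$ be the maximal standard parabolic omitting $\chi_0$. I would show that the induced reduction $E_{P_{\chi_0}}\subset E_G$ is both destabilizing and automatically $\theta$-invariant. The normal bundle
\[
\mathrm{ad}(E_G)/\mathrm{ad}(E_{P_{\chi_0}})\;=\;\bigoplus_{\alpha>0,\ \chi_0\in\mathrm{supp}(\alpha)}L_{-\alpha}
\]
has each summand satisfying $\deg L_\alpha\ge\deg L_{\chi_0}\ge 3$ by dominance, so every $L_{-\alpha}\otimes\CO_X(2)$ has negative degree and thus no global sections. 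Hence any $\theta$ lies in $H^0(X,\mathrm{ad}(E_{P_{\chi_0}})\otimes TX)$, i.e.\ preserves $E_{P_{\chi_0}}$; a short degree computation gives $\deg(\mathrm{ad}(E_G)/\mathrm{ad}(E_{P_{\chi_0}}))<0$, so $E_{P_{\chi_0}}$ destabilizes $(E_G,\theta)$ in the co-Higgs sense and semistability fails.

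For the sufficiency direction, assume $\deg L_{\chi}\le 2$ for every simple root. Then $\deg(L_{-\chi_i}\otimes\CO_X(2))\ge 0$ for each $\chi_i\in\Pi$, so one may choose nonzero $s_i\in H^0(X,L_{-\chi_i}\otimes\CO_X(2))$ and set $\theta:=\sum_i s_i$. The aim is to show that no proper parabolic reduction is simultaneously $\theta$-invariant and destabilizing. For any standard parabolic reduction $E_{P_S}\subset E_G$ extending $E_T$ with $S\subsetneq\Pi$, picking $\chi_j\notin S$ the component $s_j$ of $\theta$ lies in $L_{-\chi_j}$, which is transverse to $\mathrm{ad}(E_{P_S})$; hence $E_{P_S}$ is not $\theta$-invariant, ruling out all standard destabilizing reductions.

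The main obstacle is to exclude $\theta$-invariant destabilizing reductions $E_Q\subset E_G$ that do not extend the fixed $E_T$. My plan is to combine two ingredients: first, to replace $\theta$ by a generic perturbation in $H^0(X,\mathrm{ad}(E_G)\otimes TX)$ that retains $s_j\ne 0$ for every $\chi_j\in\Pi$, mirroring the deformation strategy used in the proof of Theorem~\ref{ths}; second, to appeal to the fact that any destabilizing reduction of a $G$-bundle on $\mathbb{P}^1$ arises from some $T$-reduction of $E_G$ (by Grothendieck's theorem together with Proposition~\ref{prop2}), so that one can analyze the possible $E_Q$ as the various liftings of $E_T$ compatible with these perturbations. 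Verifying this uniformly over the moduli of $T$-reductions, and confirming that a generic choice yields stability rather than merely semistability, is the most delicate step.
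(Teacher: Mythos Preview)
Your necessity argument and your construction of the candidate co-Higgs field $\theta=\sum_i s_i$ with $s_i\in H^0(X,L_{-\chi_i}\otimes\CO_X(2))$ coincide with the paper's. The gap is exactly where you flag it: you have not proved that $(E_G,\theta)$ is stable, only that the \emph{standard} parabolic reductions extending the fixed $E_T$ are not $\theta$-invariant. Your proposed remedy---perturb $\theta$ generically and then range over all $T$-reductions of $E_G$---is not an argument but a hope, and there is no indication of how you would control the infinitely many $\theta$-invariant parabolic reductions that might arise as $E_T$ varies.

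The missing idea is Lie-theoretic and does not require any perturbation or variation of $E_T$. At each point $x\in X$ where all $s_i(x)\ne 0$, the value $\theta(x)$ is a sum of nonzero vectors in the negative simple root spaces of $\mathfrak g$, i.e.\ a \emph{regular} (principal) nilpotent element. The key fact, exploited by Hitchin in constructing the Hitchin section, is that the only Borel subalgebra of $\mathfrak g$ containing a regular nilpotent is the one it determines, and more generally any parabolic subalgebra $\mathfrak p'\subset\mathfrak g$ containing such an element must contain the opposite Borel $\mathfrak b^1$. Globalizing: if $E_{P'}\subset E_G$ is a parabolic reduction with $\theta\in H^0(X,\mathrm{ad}(E_{P'})\otimes TX)$, then $\mathrm{ad}(E_{P'})$ contains $\mathrm{ad}(E_{B^1})$, and from this one reads off directly that every strictly anti-dominant character of $P'$ gives a line bundle of positive degree. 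The paper invokes this via \cite{HiG} and also sketches a self-contained version: any Borel reduction $E_{B'}\subset E_{P'}$ carries a trivial torus subbundle $\mathfrak t(E_{B'})$; degree reasons force $\mathfrak t(E_{B'})\subset\mathrm{ad}(E_P)$ and force it to project isomorphically to the torus piece of $\mathrm{ad}(E_B)$; its adjoint action on $\theta$ then separates the simple-root components and shows each $L_{-\chi_i}$ lies in $\mathrm{ad}(E_{P'})$ modulo $\mathrm{ad}(E_P)$, whence commutators generate all of $\mathrm{ad}(E_G)/\mathrm{ad}(E_P)$ inside $\mathrm{ad}(E_{P'})$. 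Either route closes the argument for the specific $\theta$ you wrote down, with no genericity needed.
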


When $G\, =\, \text{GL}(r, {\mathbb C})$, Theorem \ref{thmi} coincides with Theorem \ref{ths}.

\begin{remark}\label{rem1}
From the openness of the stability condition, \cite{Ma}, it follows immediately that if there is a co-Higgs field
$\theta_0$ on $E_G$ such that $(E_G,\, \theta_0)$ is stable, then for the generic co-Higgs field $\theta$ on $E_G$,
the pair $(E_G,\, \theta)$ is stable.
\end{remark}

\begin{remark}
Note that in Theorem \ref{ths}, it is necessary to take the degrees $\{m_j\}_{j=1}^r$ in decreasing order, otherwise the
criterion is not valid. Similarly, the criterion in Theorem \ref{thmi} is not valid if we take $E_T$ to be an arbitrary
holomorphic reduction of structure group of $E_G$ to the maximal torus $T$. The principal
$T$--bundle $E_T$ in Theorem \ref{ths} has to be a
reduction of structure group of the Harder--Narasimhan reduction of $E_G$.
\end{remark}

The proof of Theorem \ref{thmi} uses an auxiliary object --- namely the adjoint co-Higgs bundle as defined below --- together with a study of the 
Harder-Narasimhan filtration under the reduction of structure to a maximal torus.

Finally, for arbitrary $G$ satisfying the above hypotheses, we describe in Section \ref{SectStrata} a stratification of the corresponding co-Higgs moduli space in terms of the Harder-Narasimhan type of the underlying bundle.

\section{Adjoint bundle of a co-Higgs bundle}\label{se2}

Let $G$ be a connected, reductive, affine algebraic group defined over $\mathbb C$. The Lie algebra of $G$ will
be denoted by $\mathfrak g$. Recall that a \textit{parabolic subgroup} of $G$ is a Zariski-closed, connected
subgroup $P\, \subset\, G$ such that $G/P$ is compact. The unipotent radical of a parabolic
subgroup $P\, \subset\, G$ will be denoted by $R_u(P)$. The quotient group $L(P)\,:=\, P/R_u(P)$ is called the
\text{Levi quotient} of $P$. A \textit{Levi subgroup} of $P$ is a Zariski closed subgroup $L_P\, \subset\, P$ such that
the composition
$$
L_P\, \hookrightarrow\, P\, \longrightarrow\, L(P)\,:=\, P/R_u(P)
$$
is an isomorphism \cite[p. 158, \S~11.22]{Bo}. We recall that any parabolic subgroup $P$ admits a 
Levi subgroup, and any two Levi subgroups of $P$ differ by conjugation by an 
element of $R_u(P)$ \cite[p. 158, \S~11.23]{Bo}, \cite[\S~30.2, p. 184]{Hu}.
The center of $G$ will be denoted by $Z_G$. A 
character $\chi$ of $P$ is called \textit{strictly anti-dominant} if
\begin{itemize}
\item $\chi$ is trivial on $Z_G$, and

\item the line bundle on $G/P$, associated to the principal $P$--bundle $G\,
\longrightarrow\, G/P$ for the character $\chi$, is ample. 
\end{itemize}

For economy, we will understand $X$ to refer to the complex projective line 
${\mathbb P}^1$. Accordingly, its holomorphic tangent bundle will be denoted by 
$TX$. A co-Higgs field on a holomorphic vector bundle $V$ over $X$ is a holomorphic 
section of $\text{End}(V)\otimes TX$. Take a holomorphic principal $G$--bundle $E_G$ 
on $X$. Let
$$
\text{ad}(E_G)\,:=\, E_G\times^G {\mathfrak g}\, \longrightarrow\, X
$$
be the adjoint vector bundle associated to $E_G$ for the adjoint action of $G$ on $\mathfrak 
g$. A \textit{co-Higgs field} on $E_G$ is a holomorphic section of $\text{ad}(E_G)\otimes 
TX$. A $G$-co-Higgs bundle is a pair $(E_G,\, \theta)$, where $E_G$ is a holomorphic
principal $G$--bundle and $\theta$ is a co-Higgs field on $E_G$.

Note that a co-Higgs field on a principal $\text{GL}(r, {\mathbb C})$--bundle $E$ on 
$X$ is a co-Higgs field on the vector bundle of rank $r$ associated to $E$ by the standard 
representation of $\text{GL}(r, {\mathbb C})$.

Let $\theta$ be a co-Higgs field on a holomorphic principal $G$--bundle $E_G$ on $X$. Then $\theta$ induces a co-Higgs
field $\text{ad}(\theta)$ on the vector bundle $\text{ad}(E_G)$ which is defined as follows:
$$
\text{ad}(\theta)(w)\,=\, [\theta(x),\, w]\, , \ \ \ \forall \ \ w\, \in\, \text{ad}(E_G)_x\, ,\, \ x\,\in\, X\, .
$$

A $G$-co-Higgs bundle $(E_G,\, \theta)$ on $X$ is called
\textit{stable} (respectively, \textit{semistable})
for every pair $(P,\, E_P)$, where
\begin{itemize}
\item $P\, \subsetneq\, G$ is a parabolic subgroup, and

\item $E_P\, \subset\, E_G$ is a holomorphic reduction of structure group to $P$ such that
$$
\theta\, \in\, H^0(X, \,\text{ad}(E_P)\otimes TX)\, ,
$$
\end{itemize}
the following holds: for every strictly anti-dominant character $\chi$ of $P$, the 
line bundle $E_P\times^{\chi}\mathbb C$ on $X$ associated to $E_P$, for the character $\chi$, is of
positive (respectively, nonnegative) degree. This definition coincides with stability of principal $G$-Hitchin pairs taking values in an arbitrary line bundle, cf. \cite{GGM,BGM,BP,Sch}. It is known that any holomorphic principal $G$--bundle $E_G$ has a Harder--Narasimhan
reduction $E_Q\, \subset\, E_G$, where $Q$ is a parabolic subgroup of $G$; this Harder--Narasimhan is unique in a precise
sense, which is recalled in Section \ref{se3}.

\begin{proposition}\label{prop1}
Let $(E_G,\, \theta)$ be a semistable $G$-co-Higgs bundle on $X$. Then the co-Higgs vector bundle
$({\rm ad}(E_G),\, {\rm ad}(\theta))$ is semistable.
\end{proposition}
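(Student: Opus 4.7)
The plan is to argue by contradiction, paralleling the classical Higgs-bundle case. Suppose that $(\mathrm{ad}(E_G), \mathrm{ad}(\theta))$ fails to be co-Higgs semistable. Since the adjoint representation factors through the semisimple quotient $G/Z_G$, and the Killing form provides a self-duality of the adjoint bundle, one has $\deg \mathrm{ad}(E_G) = 0$. Consequently there exists a maximal destabilizing co-Higgs subsheaf $V \subsetneq \mathrm{ad}(E_G)$; I take $V$ saturated, so that $V$ is $\mathrm{ad}(\theta)$-invariant, co-Higgs semistable, and satisfies $\mu(V) > 0$, while every non-zero co-Higgs subsheaf of the torsion-free quotient $\mathrm{ad}(E_G)/V$ has slope strictly smaller than $\mu(V)$.

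The first non-trivial step is to show that $V$ is closed under the Lie bracket. By the Jacobi identity, the bracket $[\cdot,\cdot]\colon \mathrm{ad}(E_G) \otimes \mathrm{ad}(E_G) \to \mathrm{ad}(E_G)$ is a morphism of co-Higgs bundles once the tensor product is equipped with the Leibniz-type co-Higgs field $\mathrm{ad}(\theta)\otimes 1 + 1\otimes \mathrm{ad}(\theta)$. In characteristic zero, the tensor product of two semistable co-Higgs bundles on a smooth projective curve is again semistable --- the direct co-Higgs analogue of Simpson's tensor-product theorem, obtainable from the Hitchin--Kobayashi correspondence for co-Higgs bundles. Thus $V \otimes V$ is co-Higgs semistable of slope $2\mu(V)$. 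Since every non-zero co-Higgs subsheaf of $\mathrm{ad}(E_G)/V$ has slope smaller than $\mu(V) < 2\mu(V)$, the induced co-Higgs morphism $V\otimes V \to \mathrm{ad}(E_G)/V$ must vanish, giving $[V,V] \subset V$.

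Having realized $V$ as an $\mathrm{ad}(\theta)$-invariant Lie-subalgebra subsheaf of positive slope, I would next invoke the canonical correspondence (in the spirit of Atiyah--Bott and Anchouche--Biswas) that manufactures from such a $V$ a parabolic subgroup $P \subset G$ together with a holomorphic reduction of structure group $E_P \subset E_G$ whose nilradical bundle coincides with $V$. In particular $V \subset \mathrm{ad}(E_P)$. Because $V$ is preserved by $\mathrm{ad}(\theta)$, the section $\theta$ lies in the normalizer of $V$ inside $\mathrm{ad}(E_G)$, and that normalizer is $\mathrm{ad}(E_P)$ itself; hence $\theta \in H^0(X, \mathrm{ad}(E_P) \otimes TX)$ and $(P, E_P)$ is an admissible test pair for the (semi)stability of $(E_G, \theta)$.

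To conclude, I would translate the inequality $\mu(V) > 0$ into positivity of a character line bundle. The weight decomposition of the nilradical bundle relative to a Levi of $P$ expresses $\deg V$ as a positive linear combination of the degrees of the line bundles $E_P \times^{\chi} \mathbb{C}$ associated to strictly anti-dominant characters $\chi$ of $P$; so $\mu(V) > 0$ forces at least one such degree to be positive, contradicting the semistability of $(E_G, \theta)$. The two places where real technical work is concentrated are Step 2 --- the tensor-product preservation of co-Higgs semistability in characteristic zero --- and Step 3 --- the Lie-subalgebra-to-parabolic-reduction correspondence, where the parabolic $P$ must be extracted from the fibrewise Lie structure of $V$ in a way compatible with $\mathrm{ad}(\theta)$-invariance; both are mild extensions of arguments available in the Higgs-bundle literature, aided by the fact that on a curve the integrability condition $\theta\wedge\theta = 0$ is vacuous.
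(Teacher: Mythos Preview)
Your approach is the same as the paper's: both reduce to the Anchouche--Biswas argument (their Proposition~2.10), with the tensor-product theorem for co-Higgs semistability (the paper cites \cite{BP}) as the one external input. The paper's proof is in fact devoted almost entirely to a point you gloss over: verifying that the Killing-form isomorphism ${\mathcal B}\colon \mathrm{ad}(E_G)\to \mathrm{ad}(E_G)^*$ intertwines $\mathrm{ad}(\theta)$ with its dual co-Higgs field $\mathrm{ad}(\theta)^*$. This is what forces the Harder--Narasimhan filtration of $(\mathrm{ad}(E_G),\mathrm{ad}(\theta))$ to be self-orthogonal for the Killing form, hence makes its positive-slope part fibrewise isotropic and therefore nilpotent --- precisely the hypothesis your Step~3 needs in order to extract a parabolic from $V$. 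You invoke self-duality only to obtain $\deg\mathrm{ad}(E_G)=0$; the compatibility with the co-Higgs field is the real content, and although it is not difficult (invariance of the form plus Jacobi), it is the step the paper isolates before declaring that \cite{AB} applies.

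Two smaller remarks. First, the subalgebra you feed into Step~3 should be the entire positive-slope part of the Harder--Narasimhan filtration rather than just the first term $V_1$; the normalizer of $V_1$ alone need not be parabolic. Second, the sign in your final paragraph is off: with the paper's convention, semistability requires the line bundles for strictly anti-dominant characters to have \emph{nonnegative} degree, so the contradiction comes from producing one of \emph{negative} degree (equivalently, from $\deg(\mathrm{ad}(E_G)/\mathrm{ad}(E_P))<0$, which follows since $\deg\mathrm{ad}(E_P)>0$ and $\deg\mathrm{ad}(E_G)=0$).
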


\begin{proof}
Assume that the co-Higgs vector bundle $(\text{ad}(E_G),\, \text{ad}(\theta))$ is not semistable. Let
\begin{equation}\label{f1}
0\,=\, V_0\,\subset\, V_1\,\subset\, \cdots \,\subset\, V_{\ell-1}\,\subset\, V_\ell \,=\, \text{ad}(E_G)
\end{equation}
be the Harder--Narasimhan filtration of $(\text{ad}(E_G),\, \text{ad}(\theta))$. We recall that this means that
\begin{itemize}
\item each $V_i$ is a holomorphic subbundle of $\text{ad}(E_G)$;

\item $\text{ad}(\theta)(V_i)\, \subset\, V_i$ for all $i$;

\item the vector bundles $V_i/V_{i-1}$, $1\, \leq\, i\, \leq\, \ell$, equipped with the co-Higgs
field induced by $\text{ad}(\theta)$ is semistable; and

\item the inequalities
$$
\frac{\text{degree}(V_1)}{\text{rank}(V_1)} \, >\, \frac{\text{degree}(V_2/V_1)}{\text{rank}(V_2/V_1)}
\, > \, \cdots \, > \, \frac{\text{degree}(V_{\ell-1}/V_{\ell-2})}{\text{rank}(V_{\ell-1}/V_{\ell-2})}
 \, > \, \frac{\text{degree}(V_{\ell}/V_{\ell-1})}{\text{rank}(V_{\ell}/V_{\ell-1})}
$$
hold.
\end{itemize}
The proof of the existence and uniqueness of the above Harder--Narasimhan filtration is identical to the proof
of the existence and uniqueness of the Harder--Narasimhan filtration of a vector bundle.
Note that $\text{End}(\text{ad}(E_G))\,=\, \text{ad}(E_G)\otimes\text{ad}(E_G)^*\,=\,
\text{End}(\text{ad}(E_G)^*)$. Using this isomorphism, the co-Higgs field
$\text{ad}(\theta)$ on $\text{ad}(E_G)$ induces a co-Higgs field on the dual vector bundle
$\text{ad}(E_G)^*$. Let
\begin{equation}\label{e1}
\text{ad}(\theta)^*\, \in\, H^0(X,\, \text{End}(\text{ad}(E_G)^*)\otimes TX)
\end{equation}
be the co-Higgs field on $\text{ad}(E_G)^*$ induced by $\text{ad}(\theta)$. It may be mentioned that
the dual co-Higgs vector bundle $(\text{ad}(E_G)^*,\, \text{ad}(\theta)^*)$ is not semistable because
$(\text{ad}(E_G),\, \text{ad}(\theta))$ is not semistable. In fact the Harder--Narasimhan
filtration for $(\text{ad}(E_G)^*,\, \text{ad}(\theta)^*)$ is
\begin{equation}\label{f2}
0\,=\, W_0\,\subset\, W_1\,\subset\, \cdots \,\subset\, W_{\ell-1}\,\subset\, W_\ell \,=\, \text{ad}(E_G)^*\, ,
\end{equation}
where $W_i$ is the annihilator $V^\perp_{\ell-i}$ of $V_{\ell-i}$ for every $0\, \leq\, i\, \leq\, \ell$. Indeed,
this follows directly from the defining properties of the Harder--Narasimhan filtration and
the fact that the dual of semistable co-Higgs bundle is also semistable.

Fix a non-degenerate symmetric bilinear form $B_0$ on $\mathfrak g$ such that the adjoint action of $G$ on
$\mathfrak g$ preserves $B_0$. Since $B_0$ is $G$--invariant, it produces a fiber-wise non-degenerate 
symmetric bilinear form
$$
B'\, :\, \text{ad}(E_G)\otimes \text{ad}(E_G)\,\longrightarrow\, {\mathcal O}_X\, .
$$
In particular, $B'$ produces a holomorphic isomorphism of vector bundles
\begin{equation}\label{e4}
{\mathcal B}\, :\, \text{ad}(E_G)\,\longrightarrow\, \text{ad}(E_G)^*\, .
\end{equation}
Consider the composition
$$
\text{ad}(E_G)^*\,\stackrel{{\mathcal B}^{-1}}{\longrightarrow}\, \text{ad}(E_G)
\,\stackrel{\text{ad}(\theta)}{\longrightarrow}\, \text{ad}(E_G)\otimes TX
\,\stackrel{{\mathcal B}\otimes {\rm Id}_{TX}}{\longrightarrow}\, \text{ad}(E_G)^*\otimes TX\, .
$$
Evidently it is a co-Higgs field on the vector bundle $\text{ad}(E_G)^*$. Let
\begin{equation}\label{e2}
\text{ad}(\theta)'\, \in\, H^0(X,\, \text{End}(\text{ad}(E_G)^*)\otimes TX)
\end{equation}
be this co-Higgs field on $\text{ad}(E_G)^*$.

We will investigate how the co-Higgs field $\text{ad}(\theta)^*$ on $\text{ad}(E_G)^*$ constructed
in \eqref{e1} is related to the co-Higgs field $\text{ad}(\theta)'$ constructed in \eqref{e2}.

Let ${\mathfrak z}({\mathfrak g})$ be the center of $\mathfrak g$, so
${\mathfrak z}({\mathfrak g})$ is the Lie algebra of the center $Z_G$ of $G$.
Since $G$ is reductive, the $G$--module $\mathfrak g$ has a natural decomposition
\begin{equation}\label{e3}
{\mathfrak g}\,=\, {\mathfrak z}({\mathfrak g})\oplus \left(\bigoplus_{i=1}^d {\mathfrak h}_i\right)\, ,
\end{equation}
where each ${\mathfrak h}_i$ is an ideal in $\mathfrak g$ with the Lie algebra
${\mathfrak h}_i$ being simple. The decomposition in \eqref{e3} is
orthogonal with respect to the earlier mentioned
symmetric bilinear form $B_0$ on $\mathfrak g$ because $B_0$ is
preserved by the adjoint action of $G$. Since ${\mathfrak h}_i$ is simple, if
$B_i$ and $\widetilde{B}_i$ are two two $G$--invariant nonzero
symmetric bilinear forms on ${\mathfrak h}_i$, then there is a nonzero constant
$\lambda\, \in\, \mathbb C$ such that $\widetilde{B}_i\,=\, \lambda\cdot B_i$. On the other
hand, a simple Lie algebra has the Killing form which is nondegenerate. From these it follows that
$\text{ad}(\theta)'$ in \eqref{e2} does not depend on the choice of $B_0$. More precisely, it
coincides with $\text{ad}(\theta)^*$ in \eqref{e1}.

Since the isomorphism ${\mathcal B}$ in \eqref{e4} takes the co-Higgs bundle $(\text{ad}(E_G),\,\text{ad}(\theta))$
to the co-Higgs bundle $(\text{ad}(E_G)^*,\,\text{ad}(\theta)')$, it takes the Harder--Narasimhan filtration
of $\text{ad}(E_G)$ for $(\text{ad}(E_G),\,\text{ad}(\theta))$ to the Harder--Narasimhan filtration
of $\text{ad}(E_G)^*$ for $(\text{ad}(E_G)^*,\,\text{ad}(\theta)')$. It was observed above that the co-Higgs field
$\text{ad}(\theta)'$ coincides with $\text{ad}(\theta)^*$. Consequently, the isomorphism ${\mathcal B}$ in \eqref{e4} takes
the filtration of $\text{ad}(E_G)$ in \eqref{f1} to the filtration of $\text{ad}(E_G)^*$ in \eqref{f2}.

The tensor product of two semistable co-Higgs vector bundles is again
semistable \cite[p. 2263,
Theorem 1.1]{BP}. Therefore, the proof of Proposition 2.10 in \cite[p. 214]{AB} now goes through.
\end{proof}

\begin{corollary}\label{cor1}
Let $E_G$ be a holomorphic principal $G$--bundle on $X$ that admits a
co-Higgs field such that the resulting $G$-co-Higgs bundle is semistable. Then
the vector bundle ${\rm ad}(E_G)$ admits a
co-Higgs field such that the resulting co-Higgs bundle is semistable.
\end{corollary}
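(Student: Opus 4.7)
The plan is essentially immediate from Proposition \ref{prop1}, so the ``proof'' reduces to unwinding the definitions and choosing the right co-Higgs field on $\mathrm{ad}(E_G)$. By hypothesis, there exists a co-Higgs field $\theta \in H^0(X, \mathrm{ad}(E_G)\otimes TX)$ such that the $G$-co-Higgs bundle $(E_G, \theta)$ is semistable. The natural candidate for a co-Higgs field on the vector bundle $\mathrm{ad}(E_G)$ is the induced field $\mathrm{ad}(\theta)$ already constructed in Section \ref{se2}, which is the section of $\mathrm{End}(\mathrm{ad}(E_G))\otimes TX$ acting fiberwise by the bracket $w\mapsto [\theta(x), w]$.

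The only thing left to verify is the semistability of the resulting co-Higgs vector bundle $(\mathrm{ad}(E_G), \mathrm{ad}(\theta))$, and this is precisely what Proposition \ref{prop1} asserts. So the proof is: take $\theta$ as provided, define $\mathrm{ad}(\theta)$ as in Section \ref{se2}, and apply Proposition \ref{prop1} to conclude that $(\mathrm{ad}(E_G), \mathrm{ad}(\theta))$ is a semistable co-Higgs bundle; hence $\mathrm{ad}(E_G)$ admits the desired co-Higgs field.

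There is no substantive obstacle here, since the entire analytic content, namely the stability comparison between the principal bundle and its adjoint vector bundle in the co-Higgs category, was already resolved in the proof of Proposition \ref{prop1} (via the self-duality of $\mathrm{ad}(E_G)$ coming from the invariant form $B_0$, the identification of $\mathrm{ad}(\theta)^*$ with $\mathrm{ad}(\theta)'$, and the tensor product theorem of \cite{BP} together with the argument of \cite{AB}). The corollary simply records the existential consequence: semistability of the $G$-co-Higgs bundle is enough to guarantee the existence of a semistable co-Higgs structure on the adjoint vector bundle, without having to exhibit a different field.
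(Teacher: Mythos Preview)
Your proof is correct and matches the paper's own argument essentially verbatim: take the given $\theta$ with $(E_G,\theta)$ semistable, form $\mathrm{ad}(\theta)$, and invoke Proposition~\ref{prop1} to conclude that $(\mathrm{ad}(E_G),\mathrm{ad}(\theta))$ is semistable. There is nothing to add.
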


\begin{proof}
If $\theta$ is a co-Higgs field on $E_G$ such that the $G$-co-Higgs bundle $(E_G,\,
\theta)$ is semistable, then Proposition \ref{prop1} says that the co-Higgs bundle
$({\rm ad}(E_G),\, {\rm ad}(\theta))$ is semistable.
\end{proof}

Note that from Theorem \ref{ths} we know when the vector bundle ${\rm ad}(E_G)$ admits a
co-Higgs field such that the resulting co-Higgs bundle is semistable.

\section{Torus reduction of principal $G$--bundles}\label{se3}

Let $E_G$ be a holomorphic principal $G$--bundle on $X$. Let
\begin{equation}\label{e5}
E_P\, \subset\, E_G
\end{equation}
be the Harder-Narasimhan reduction for $E_G$ \cite{AAB}. It should be clarified that we are not
assuming that $E_G$ is not semistable. If $E_G$ is semistable, then
$P\,=\, G$. It should be mentioned that $E_G$ determines the conjugacy class of the
parabolic subgroup $P$ in \eqref{e5}. Indeed, for any $g_0\,\in\, G$, the sub-fiber bundle
$E_Pg_0\, \subset\, E_G$ is a holomorphic reduction of structure group of $E_G$ to
$g^{-}_0Pg_0$. This reduction of structure group $E_Pg_0$ of $E_G$ to $g^{-}_0Pg_0$ is also
Harder-Narasimhan reduction for $E_G$. The Harder-Narasimhan reduction is unique in the
sense that any two Harder-Narasimhan reductions differ, in the above way, by some element of $G$.

Fix a Borel subgroup $B\, \subset\, P$ of $G$, and also fix a maximal
torus $T\, \subset\, B$. Given any parabolic subgroup of $Q\, \subset\ G$, there is an
element $g_0\,\in\, G$ such that $g^{-1}_0Qg_0\, \supset\, B$ \cite[Theorem 21.3]{Hu}. We assume that $P$ in \eqref{e5}
contains $B$.

The following lemma is basically a consequence of the splitting theorem of Birkhoff and Grothendieck \cite{Bi,Gr}.

\begin{lemma}\label{lem1}
If $E'_P$ is a holomorphic principal $P$--bundle $E'_P$ on $X\,=\, {\mathbb P}^1$, then $E'_P$ admits a holomorphic reduction
of structure group to the Borel subgroup $B$. In particular, the
holomorphic principal $P$--bundle $E_P$ in \eqref{e5} admits a holomorphic
reduction of structure group to $B$.
\end{lemma}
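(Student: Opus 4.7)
The plan is to reduce the problem to Grothendieck's theorem on principal bundles on $\PP^1$ by passing to the Levi quotient. Two group-theoretic facts get us started. Since $B$ is a Borel subgroup of $G$ contained in the parabolic $P$, it is also a Borel subgroup of $P$ (any Borel of $G$ lying in $P$ is a Borel of $P$, because $P/B$ is closed in $G/B$, hence projective). Moreover, $R_u(P)$ is a connected, solvable, normal subgroup of $P$, so by maximality of $B$ among connected solvable subgroups of $P$ we have $R_u(P)\,\subset\, B$. Consequently $B_L\,:=\,B/R_u(P)$ is a Borel subgroup of the connected reductive group $L(P)\,=\,P/R_u(P)$.

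Next, I would form the associated principal $L(P)$-bundle $E'_{L(P)}\,:=\,E'_P/R_u(P)$ on $X\,=\,\PP^1$. By the theorem of Grothendieck \cite{Gr}, generalizing the splitting theorem of Birkhoff \cite{Bi}, every holomorphic principal bundle on $\PP^1$ with connected reductive structure group admits a holomorphic reduction of structure group to a maximal torus, hence also to any Borel subgroup. Applying this to $E'_{L(P)}$ produces a holomorphic reduction of structure group to $B_L$; equivalently, the associated fiber bundle $E'_{L(P)}/B_L\,\longrightarrow\, X$ admits a holomorphic section.

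Finally, I would lift this to a reduction of $E'_P$. Because $R_u(P)\,\subset\, B$, quotienting in stages yields a natural isomorphism
$$
E'_P/B \,\cong\, \big(E'_P/R_u(P)\big)\big/\big(B/R_u(P)\big) \,=\, E'_{L(P)}/B_L
$$
of fiber bundles over $X$. A holomorphic section of the right-hand side, obtained in the previous step, therefore corresponds to a holomorphic section of $E'_P/B\,\longrightarrow\, X$, which is precisely a holomorphic reduction of structure group of $E'_P$ to $B$. Specializing to the Harder--Narasimhan reduction $E_P$ of \eqref{e5} yields the second assertion of the lemma.

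There is no substantive obstacle here; the main non-trivial input is Grothendieck's theorem, applied to the reductive group $L(P)$. Everything else is a bookkeeping exercise confirming that a Borel of $G$ inside $P$ descends to a Borel of the Levi quotient, and that reductions lift accordingly through the unipotent radical.
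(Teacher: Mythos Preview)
Your proof is correct and follows essentially the same route as the paper: pass to the Levi quotient $L(P)=P/R_u(P)$, apply Grothendieck's theorem there, and lift the resulting reduction back through the unipotent radical using $R_u(P)\subset B$. The only cosmetic difference is that the paper reduces $E'_{L(P)}$ to the maximal torus $q(T)$ and then pulls back to the subgroup $q^{-1}(q(T))=T\cdot R_u(P)\subset B$, whereas you reduce to the Borel $B_L=B/R_u(P)$ and use the identification $E'_P/B\cong E'_{L(P)}/B_L$; both arguments are equivalent.
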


\begin{proof}
As before, the unipotent radical of $P$ is denoted by $R_u(P)$; let $L(P)\,:=\, P/R_u(P)$
be the Levi quotient. Let $$q\, :\, P\, \longrightarrow\,
P/R_u(P)\,=\, L(P)$$ be the quotient map. Note that $q(T)$ is a maximal torus of
$L(P)$. Let
$$
E'_{L(P)}\, := \, E'_P/R_u(P)
$$
be the holomorphic principal $L(P)$--bundle on $X\,=\ {\mathbb P}^1$ obtained
by extending the structure group of $E'_P$ using the quotient map $q$. Let
$$
q_1\, :\, E'_P \, \longrightarrow\,E'_P/R_u(P)\,=\, E'_{L(P)}
$$
be the quotient map.

Since $L(P)$
is a connected reductive complex algebraic group, 
the holomorphic principal $L(P)$--bundle $E'_{L(P)}$ admits a holomorphic reduction of 
structure group to the maximal torus $q(T)\, \subset\ L(P)$ \cite[p.~122,
Th\'eor\`eme 1.1]{Gr}. Let
$$
E'_{q(T)}\, \subset\, E'_{L(P)}
$$
be a holomorphic reduction of structure group to $q(T)$. Now we note that the inverse
image
$$
q^{-1}_1 (E'_{q(T)})\, \subset\, E'_P
$$
is a holomorphic reduction of structure group, to the subgroup $q^{-1}(q(T))\, \subset\,
P$, of the holomorphic principal $P$--bundle $E'_P$. On the other hand, we have
$$
q^{-1}(q(T))\, \subset\, B\, .
$$
Let $E'_B$ be the holomorphic principal $B$--bundle on $X$ obtained by extending the
structure group of the above holomorphic principal $q^{-1}(q(T))$--bundle $q^{-1}_1 (E'_{q(T)})$
using the inclusion of $q^{-1}(q(T))$ in $B$. This $E'_B$ is evidently a
holomorphic reduction of structure group of $E'_P$ to the subgroup $B$ of $P$.
\end{proof}

A stronger statement holds for the reduction $E_P$ in \eqref{e5}.

\begin{proposition}\label{prop2}
The holomorphic principal $P$--bundle $E_P$ in \eqref{e5} admits a holomorphic
reduction of structure group to the maximal torus $T\, \subset\, P$.
\end{proposition}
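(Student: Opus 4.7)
The plan is to build on Lemma~\ref{lem1}, which provides a reduction $E_B\subset E_P$ of structure group to the Borel subgroup $B$, and then to reduce $E_B$ one further step to the maximal torus $T\subset B$. Writing $q\colon B\longrightarrow B/R_u(B)\,=\,T$ for the quotient, the pushforward $E_T\,:=\,E_B/R_u(B)$ is a principal $T$--bundle on $X$, and a reduction of $E_B$ to $T$ is nothing but a $T$--equivariant section of the projection $E_B\longrightarrow E_T$.

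To produce such a section I would filter the unipotent radical by $B$--invariant normal subgroups
$$R_u(B)\,=\,U_0\,\supset\,U_1\,\supset\,\cdots\,\supset\,U_N\,=\,\{e\}$$
with each successive quotient $U_{i-1}/U_i$ isomorphic to the additive group $\mathbb C$ and acted on by $T$ through some positive root $\alpha_i$ of $(G,T)$. Setting $B_i\,:=\,B/U_i$, pushforward of $E_B$ gives a tower $E_B\,=\,E_{B_N}\longrightarrow E_{B_{N-1}}\longrightarrow\cdots\longrightarrow E_{B_0}\,=\,E_T$, and I would build a $T$--reduction of $E_B$ by inductively lifting a $T$--reduction up this tower. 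At the $i$-th stage, the obstruction to lifting a $T$--reduction of $E_{B_{i-1}}$ to one of $E_{B_i}$ lies in $H^1(X,\,L_{\alpha_i})$, where $L_{\alpha_i}\,:=\,E_T\times^{\alpha_i}\mathbb C$. Since $X\,=\,\PP^1$, this cohomology group vanishes as soon as $\deg L_{\alpha_i}\,\geq\,-1$, and in particular whenever $\deg L_{\alpha_i}\,\geq\,0$.

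The main obstacle is therefore a numerical check: $\deg L_\alpha\,\geq\,0$ for every positive root $\alpha$ of $(G,T)$. Using that $T\cap R_u(P)\,=\,\{e\}$, the restriction of $q$ identifies $T$ with the maximal torus $q(T)\subset L(P)$, and under this identification the $T$--bundle $E_T$ coincides with the maximal-torus reduction of the $L(P)$--bundle $E_{L(P)}\,=\,E_P/R_u(P)$. This $L(P)$--bundle is semistable by the definition of the Harder--Narasimhan reduction, which on $\PP^1$ forces $\deg L_\alpha\,=\,0$ for every root $\alpha$ of the Levi $L(P)$. For the remaining positive roots --- those appearing in the root-space decomposition of $R_u(P)$ --- the second defining property of the Harder--Narasimhan reduction (strict negativity of $\deg(E_P\times^\chi\mathbb C)$ for every strictly anti-dominant character $\chi$ of $P$) says that the cocharacter $\deg E_T\in X_*(T)$ lies in the strictly dominant chamber determined by the simple roots not in $L(P)$; pairing any such $\alpha\in R_u(P)$ against $\deg E_T$ then yields $\deg L_\alpha\,=\,\langle\alpha,\,\deg E_T\rangle\,>\,0$. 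With the two cases combined, every obstruction vanishes and the inductive lift produces the desired $T$--reduction of $E_B$, and hence of $E_P$.
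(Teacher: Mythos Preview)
Your proposal is correct and follows essentially the same route as the paper: start from the $B$--reduction of Lemma~\ref{lem1}, filter $R_u(B)$ by $T$--stable normal subgroups with one-dimensional successive quotients, and kill the obstructions in $H^1(X,L_{\alpha_i})$ using $\deg L_{\alpha_i}\geq 0$, which follows from the defining properties of the Harder--Narasimhan reduction. The only cosmetic difference is that the paper packages the induction as showing the twisted pointed set $H^1(X,E_T(R_u(B)))$ is trivial (so $E_B\cong E_T\times^T B$), whereas you phrase it as lifting a $T$--equivariant section through the tower $E_{B_i}$; the cohomological content is identical.
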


\begin{proof}
Fix a holomorphic reduction of structure group
\begin{equation}\label{br}
E_B\, \subset\, E_P \, \longrightarrow\, X\, ,
\end{equation}
which exists by Lemma \ref{lem1}.

The unipotent radical of $B$ will be denoted by $R_u(B)$.
Consider the short exact sequence of algebraic groups
\begin{equation}\label{she}
e\, \longrightarrow\, R_u(B) \, \longrightarrow\, B \, \stackrel{\phi}{\longrightarrow}\, T_0\, :=\, B/R_u(B)
\, \longrightarrow\, e\, .
\end{equation}
Note that the restriction $\phi\vert_T \, :\, T\, \longrightarrow\, T_0$ is an isomorphism.
We will often identify $T$ with $T_0$ using $\phi\vert_T$. The short exact sequence
in \eqref{she} is right-split \cite[p. 158, \S~11.23]{Bo}, \cite[\S~30.2, p. 184]{Hu}. Fix a homomorphism
\begin{equation}\label{hs}
s\, :\, T_0\,=\, T\, \longrightarrow\, B
\end{equation}
such that $\phi\circ s\,=\, \text{Id}_{T_0}$.

Consider the holomorphic principal $B$--bundle $E_B$ in \eqref{br}. 
Let
$$
E_{T_0}\, :=\, E_B\times^\phi T_0\, \longrightarrow\, X
$$
be the holomorphic principal $T_0$--bundle obtained by extending the structure group of $E_B$
using the projection $\phi$ in \eqref{she}. Let $E_T\, \longrightarrow\, X$ be the
holomorphic principal $T$--bundle given by $E_{T_0}$ using the isomorphism $\phi\vert_T$. Let
\begin{equation}\label{hsl}
E'_B\, :=\, E_{T_0}\times^s B \, \longrightarrow\, X
\end{equation}
be the holomorphic principal $B$--bundle obtained by extending the structure group of $E_{T_0}$
using the homomorphism $s$ in \eqref{hs}. Note that $E_{T_0}$ is a holomorphic reduction of
structure group of the principal $B$--bundle $E'_B$ to $T_0$, because $E'_B$ is the extension of
structure group of $E_{T_0}$.

Therefore, to prove the proposition it suffices to show that the principal $B$--bundle $E'_B$
constructed in \eqref{hsl} is holomorphically isomorphic to the principal $B$--bundle $E_B$.

Note for for both $E_B$ and $E'_B$, the holomorphic principal $T_0$--bundle obtained by extending 
the structure group using the projection $\phi$ in \eqref{she} is $E_{T_0}\,=\, E_T$. We will 
use this fact in proving that $E_B$ and $E'_B$ are holomorphically isomorphic. For that we 
need to understand the class of principal $B$--bundles that give the same principal 
$T$--bundle $E_{T}$ by extension of structure group using $\phi$.

The subgroup $T\, \subset\, B$ acts on $B$ via inner automorphisms. This action of $T$ on $B$ evidently
preserves the unipotent radical $R_u(B)$ in \eqref{she}; in fact, any automorphism of
$B$ preserves $R_u(B)$. Any $t\, \in\, T$ acts on $R_u(B)$ as
$u\, \longmapsto\, tut^{-1}$, $u\, \in\, R_u(B)$. There is a filtration of algebraic subgroups
\begin{equation}\label{fl}
R_u(B)\,=\, U_0\, \supsetneq\, U_1\, \supsetneq\, U_2\, \supsetneq\, \cdots \, \supsetneq\, U_n \, \supsetneq\, U_{n+1}\,=\, e
\end{equation}
such that
\begin{itemize}
\item $U_i$ is a normal subgroup of $R_u(B)$ for all $0\, \leq\, i\, \leq\, n+1$, 

\item the quotient $U_i/U_{i+1}$ is isomorphic to the additive group
${\mathbb C}$ for all $0\, \leq\, i\, \leq\, n$, and

\item the adjoint action of $T$ on $B$ preserves the filtration in \eqref{fl}.
\end{itemize}
To construct the filtration in \eqref{fl} consider the central series for $R_u(B)$. Each successive quotient of this
central series is a direct sum of copies of the additive group ${\mathbb C}$. The adjoint action of $T$ on $R_u(B)$
clearly preserves the central series for $R_u(B)$. So we may decompose each successive quotient of the
central series as a direct sum of one-dimensional $T$--modules. This way we get a finer filtration of $R_u(B)$ that
satisfies all the conditions for \eqref{fl}.

Let $$E_T(U_i)\,=\, E_T\times^T U_i\, \longrightarrow\, X$$ be the fiber bundle
associated to the principal $T$--bundle $E_T$ for the
adjoint action of $T$ on $U_i$. So each fiber of $E_T(U_i)$ is a group algebraically isomorphic
to $U_i$. Next we define the twisted cohomology set $H^1(X,\, E_T(U_i))$;
they are explicitly described below. The reason for looking into this twisted cohomology is that
the set $H^1(X,\, E_T(U_0))$ parametrizes all the isomorphism classes of holomorphic principal
$B$--bundles on $X$ that produce the same holomorphic principal $T$--bundle $E_T$ after extension
of structure group using $\phi$ in \eqref{she}.

Cover $X$ using two affine open subsets which we denote by $X_1$ and $X_2$; they can
be, for example, the complements of $0$ and $\infty$ respectively. Trivialize
$E_T$ over $X_1$ and $X_2$. Let $\tau_{12}$ be the corresponding $1$--cocycle, which is a map from
$X_{12}\,:=\,X_1\bigcap X_2$ to $T$.

Take any holomorphic principal $B$--bundle
$$E^0_B\, \longrightarrow\, X$$
such that the principal $T$--bundle $E^0_B\times^\phi T$ obtained by extending the
structure group of $E^0_B$ using $\phi$ is $E_T$. Then a cocycle giving $E^0_B$ is of the form
$$
u_{12} \tau_{12}\, :\, X_{12}\,:=\, X_1\cap X_2 \, \longrightarrow\, B\, ,
$$
where $u_{12}\, :\, X_{12} \, \longrightarrow\, U_0\,=\, R_u(B)$. This $u_{12}$ is called
a twisted cocycle. Any two twisted cocycles $u^1_{12}$ and $u^2_{12}$ are called {\it equivalent}
if there are holomorphic maps
$z_i\, :\, X_i\, \longrightarrow\, R_u(B)$, $i\,=\,1,\, 2$, such that
$$
z_1 u^1_{12}\tau_{12}(z^{-1}_2)\,:=\, z_1 u^1_{12}\tau_{12}z^{-1}_2\tau^{-1}_{12}
\,=\, u^2_{12}\, .
$$
The equivalence classes of twisted cocycles form the pointed set
$H^1(X,\, E_T(U_0))$ mentioned earlier (see \cite[Appendix]{FM}). The base point in
$H^1(X,\, E_T(U_0))$ corresponds to the constant function $u_{12}\,=\, e$ (the identity
element of $G$ is denoted by $e$). As mentioned before,
the elements of $H^1(X,\, E_T(U_0))$ correspond to the isomorphism classes of
holomorphic principal $B$--bundles that give $E_T$ by extension of structure group using
$\phi$ \cite[Appendix]{FM}, \cite{BV}. The base point in $H^1(X,\, E_T(U_0))$ corresponds to $E'_B$
constructed in \eqref{hsl}; it was noted earlier that $E'_B$ produces $E_T$ by extension of
structure group using the homomorphism $\phi$.

To prove that $E_B$ and $E'_B$ are isomorphic, it suffices to prove that
\begin{equation}\label{sp}
H^1(X,\, E_T(U_0))\,=\, \{e\}\, ,
\end{equation}
because $E_B$ gives $E_T$ by extension of structure group using $\phi$.

Recall that the action of $T$ on $R_u(B)$ preserves the filtration in \eqref{fl}.
Therefore, $T$ acts on each successive quotient 
$U_i/U_{i+1}$, $0\, \leq\, i\, \leq\, n$. So we can similarly construct
$H^1(X,\, E_T(U_i))$ and $H^1(X,\, E_T(U_i/U_{i+1}))$, for all $0\, \leq\, i\, \leq\, n$.
The inclusion of $U_{i+1}$ in $U_i$ produces a map of pointed sets
$$
\beta_i\, :\, H^1(X,\, E_T(U_{i+1}))\, \longrightarrow\, H^1(X,\, E_T(U_i))\, ;
$$
similarly, the quotient map
\begin{equation}\label{mq}
q_i\,:\,U_i\, \longrightarrow\, U_i/U_{i+1}
\end{equation}
produces a map of pointed sets
$$
\gamma_i\, :\, H^1(X,\, E_T(U_i))\, \longrightarrow\, H^1(X,\, E_T(U_i/U_{i+1}))
$$
\cite[Appendix]{FM}, \cite{BV}.

We claim that the image of $\beta_i$ coincides with the inverse image, under the map
$\gamma_i$, of the base point of $H^1(X,\, E_T(U_i/U_{i+1}))$.

To prove the claim, let $$e_0\, \in\, H^1(X,\, E_T(U_i/U_{i+1}))$$
be the base point. Since the composition
$$
U_{i+1}\, \hookrightarrow\, U_i \, \stackrel{q_i}{\longrightarrow}\, U_i/U_{i+1}\, ,
$$
where $q_i$ is constructed in \eqref{mq}, is the trivial homomorphism, it follows immediately that
$$
\gamma_i(\beta_i(H^1(X,\, E_T(U_{i+1})))) \,=\, e_0\, .
$$
So to prove the claim it suffices to show that
\begin{equation}\label{cp}
\gamma^{-1}_i(e_0)\, \subset\, \beta_i(H^1(X,\, E_T(U_{i+1})))\, .
\end{equation}

Let $u_{12}$ be a twisted cocycle with values in $U_i$ such that the
element $\widetilde{u}_{12}$ in $H^1(X,\, E_T(U_i))$ defined by $u_{12}$ lies in
$\gamma^{-1}_i(e_0)$. Note that the element $\gamma_i(\widetilde{u}_{12})\, \in\,
H^1(X,\, E_T(U_i/U_{i+1}))$ is given by the cocycle $q_i\circ u_{12}$, where $q_i$
is the projection in \eqref{mq}. Therefore, there are morphisms
$$
z_i\, :\, X_{i}\, \longrightarrow\, U_i/U_{i+1}\, , \ \ i\,=\, 1,\, 2\, ,
$$
such that
$$
z_1(q_i\circ u_{12})\tau_{12}(z^{-1}_2)\,:=\, z_1(q_i\circ u_{12})\tau_{12}z^{-1}_2\tau^{-1}_{12}
\,=\, e_i\, ,
$$
where $e_i$ denotes the identity element of $U_i/U_{i+1}$.

Since the varieties $U_i$ and $U_i/U_{i+1}$ are both affine spaces, and $q_i$ is a projection
map of affine spaces, there are algebraic morphisms
$$
\widetilde{z}_i\, :\, X_{i}\, \longrightarrow\, U_i\, , \ \ i\,=\, 1,\, 2\, ,
$$
such that $z_i\,=\, q_i\circ \widetilde{z}_i$ for $i\,=\, 1,\, 2$. Now define
$$
\widehat{u}_{12}\,:=\,
\widetilde{z}_1(q_i\circ u_{12})\tau_{12}(\widetilde{z}^{-1}_2)\,:=\,
\widetilde{z}_1(q_i\circ u_{12})\tau_{12}\widetilde{z}^{-1}_2\tau^{-1}_{12}\, .
$$
It is straight-forward to check that $\widehat{u}_{12}$ maps $X_{12}\,:=\, X_1\cap X_2$
to $U_{i+1}$. The element
$$
\widehat{u}'\,\in\, H^1(X,\, E_T(U_{i+1}))
$$
defined by $\widehat{u}_{12}$ satisfies the condition that $$\beta_i (\widehat{u}')\,=\,
\widetilde{u}_{12}\, .$$ This proves the inclusion in \eqref{cp}. Hence the claim is also proved.

We will prove \eqref{sp} using induction: It will be shown that 
\begin{equation}\label{sp2}
H^1(X,\, E_T(U_i))\,=\, \{e\}
\end{equation}
for all $0\, \leq\, i\, \leq\, n+1$. From the above claim it follows that \eqref{sp2}
holds if
\begin{equation}\label{sp3}
H^1(X,\, E_T(U_i/U_{i+1}))\,=\, \{e\}
\end{equation}
for all $0\, \leq\, i\, \leq\, n$.

Recall that the quotient $U_i/U_{i+1}$ is isomorphic to the additive group $\mathbb C$. Note that
the action of $T$ on $U_i/U_{i+1}$ is linear. Let
$L_i$ denote the holomorphic line bundle on $X$ associated to the principal $T$--bundle
$E_T$ for the action of $T$ on $U_i/U_{i+1}$.

Now from the properties of the Harder--Narasimhan reduction it follows that
$$
\text{degree}(L_i)\, \geq\, 0
$$
\cite[p.~694, Theorem 1]{AAB}.
This implies that $H^1(X,\, L_i)\,=\, 0$. Hence \eqref{sp3} holds. Therefore, we conclude that \eqref{sp2}, in
particular \eqref{sp}, holds. This completes the proof of the proposition.
\end{proof}

\section{Criterion for stable co-Higgs field}

Let $E_G$ be a holomorphic principal $G$--bundle over $X\,=\, {\mathbb P}^1$.
Let
\begin{equation}\label{re}
E_P\, \subset\, E_G
\end{equation}
be the Harder--Narasimhan reduction; if $E_G$ is semistable,
then $P\, =\, G$. As before, fix $B$ and $T$; assume that $B\, \subset\, P$.

Let
$$
E_T\, \subset\, E_B\, \subset\, E_P
$$
be holomorphic reductions (see Lemma \ref{lem1} and Proposition \ref{prop2}).

Let $\chi\, :\, T\, \longrightarrow\, {\mathbb C}^*$ be a simple root with respect to
$(B,\, T)$. Then the associated line bundle
$$
L_\chi\, :=\, E_T\times^\chi {\mathbb C}\, \longrightarrow\, X
$$
satisfies the following condition:
\begin{equation}\label{at6}
\text{degree}(L_\chi)\, \geq\, 0
\end{equation}
\cite[p.~712, Theorem 6]{AAB}.

\begin{theorem}\label{thm1}
There is a co-Higgs field
$$
\theta\, \in\, H^0(X,\, {\rm ad}(E_G)\otimes TX)
$$
such that $(E_G,\, \theta)$ is stable if and only if for every
simple root
$$\chi\, :\, T\, \longrightarrow\, {\mathbb C}^*$$
with respect to $(B,\, T)$, the inequality
$${\rm degree}(L_\chi)\, \leq\, 2$$ holds. In fact, if
${\rm degree}(L_\chi)\, \geq\, 3$ for some simple root $\chi\, :\, T\, \longrightarrow\, {\mathbb C}^*$
with respect to $B$, then there is no co-Higgs field
$$
\theta\, \in\, H^0(X,\, {\rm ad}(E_G)\otimes TX)
$$
such that $(E_G,\, \theta)$ is semistable.
\end{theorem}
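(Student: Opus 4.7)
The argument splits into two implications: (I) if $\deg(L_\chi) \geq 3$ for some simple root $\chi$, then no co-Higgs field makes $(E_G, \theta)$ semistable; and (II) if $\deg(L_\chi) \leq 2$ for every simple root $\chi$, then some $\theta$ makes $(E_G, \theta)$ stable. Assertion (I) is the second statement of the theorem and also gives the ``only if'' direction of the stable equivalence.

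For (I), the plan is to construct a universal destabilizing reduction. Since $E_G = E_T \times^T G$ by Proposition \ref{prop2}, the adjoint bundle admits the root-space decomposition
\[ \text{ad}(E_G) \,=\, \mathcal{O}_X^{\dim T} \,\oplus\, \bigoplus_\alpha L_\alpha. \]
Fix a simple root $\chi$ with $\deg(L_\chi) \geq 3$ and let $P_\chi \supset B$ be the associated standard maximal parabolic (whose Levi is generated by the simple roots other than $\chi$); the $T$-module complement is $\mathfrak{g}/\mathfrak{p}_\chi = \bigoplus \mathfrak{g}_{-\alpha}$, where $\alpha$ ranges over positive roots whose simple-root expansion involves $\chi$. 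Each such $L_{-\alpha}$ satisfies $\deg(L_{-\alpha}) \leq -\deg(L_\chi) \leq -3$, so $H^0(X, L_{-\alpha} \otimes TX) = 0$. Hence every $\theta \in H^0(X, \text{ad}(E_G) \otimes TX)$ lies automatically in $H^0(X, \text{ad}(E_{P_\chi}) \otimes TX)$, with $E_{P_\chi} := E_T \times^T P_\chi$, making $E_{P_\chi}$ a $\theta$-invariant reduction. Finally, the strictly anti-dominant character of $P_\chi$ is a positive integer multiple of $-\omega_\chi$, with $\omega_\chi = \sum_i a_i \chi_i$ having all $a_i > 0$; the associated line bundle on $E_{P_\chi}$ has degree equal to a positive multiple of $-\sum_i a_i \deg(L_{\chi_i}) \leq -a_\chi \deg(L_\chi) < 0$, contradicting semistability of $(E_G, \theta)$.

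For (II), assume $\deg(L_\chi) \leq 2$ for every simple $\chi$. A preparatory observation is that the decreasing-order list of summand degrees of $\text{ad}(E_G)$ has consecutive gaps at most $2$: any positive root $\alpha$ of height $\geq 2$ admits a simple root $\chi_j$ such that $\alpha - \chi_j$ is still a positive root, so removing $\chi_j$ drops the degree by $\deg(L_{\chi_j}) \leq 2$, and iterating (together with the symmetric argument on the negative side) covers the whole list. Theorem \ref{ths} applied to $\text{ad}(E_G)$ then yields a semistable co-Higgs field on $\text{ad}(E_G)$, the generic such field leaving no subbundle invariant. To construct $\theta$ on $E_G$, the plan is to start with the model $\theta_0 := \sum_{\chi\text{ simple}} e_{-\chi}$, where each $e_{-\chi} \in H^0(X, L_{-\chi} \otimes TX)$ is nonzero (which is possible since this bundle has nonnegative degree), and then perturb by a generic Cartan-valued term $\eta \in H^0(X, \mathcal{O}_X^{\dim T} \otimes TX)$ to obtain $\theta := \theta_0 + \eta$ whose fiber at a generic $x \in X$ is a regular element of $\mathfrak{g}$. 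Since any $\theta$-invariant parabolic reduction $E_Q \subsetneq E_G$ produces an $\text{ad}(\theta)$-invariant subbundle $\text{ad}(E_Q)$, such $E_Q$ are ruled out by combining (i) regularity of $\theta(x)$, which pins down the Borel subalgebra containing $\theta(x)$ and forces $Q$ to be standard with respect to an opposite Borel relative to $E_T$, with (ii) the opposite-parabolic counterpart of the computation in (I), in which $\deg(L_\chi) \geq 0$ renders the strictly anti-dominant character on the candidate reduction $E_T \times^T Q$ of nonnegative, generically strictly positive, degree.

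The principal obstacle is direction (II). The map $\theta \mapsto \text{ad}(\theta)$ lands in a proper linear subspace of $H^0(X, \text{End}(\text{ad}(E_G)) \otimes TX)$, and $\text{ad}(\theta)$ always preserves at least the image of $\theta$, so the ``no invariant subbundle'' property from Theorem \ref{ths} does not transfer wholesale. The correct weakening is to rule out only the Lie-theoretic subbundles $\text{ad}(E_Q)$, which the regular-fiber argument above is designed to achieve; the delicate case is when some $\deg(L_{\chi_i})$ vanish, so that the strictly anti-dominant character degree becomes borderline, and the Cartan-valued perturbation $\eta$ is precisely what breaks the resulting degeneracy.
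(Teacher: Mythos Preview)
Your direction (I) is essentially the paper's argument: both extend $E_B$ to the maximal parabolic $P_\chi$, decompose $\mathfrak g/\mathrm{Lie}(P_\chi)$ as a $T$--module, observe that every line summand of $\mathrm{ad}(E_G)/\mathrm{ad}(E_{P_\chi})$ has degree $\le -3$, and conclude that every co-Higgs field already lies in $H^0(X,\mathrm{ad}(E_{P_\chi})\otimes TX)$. You make the final step explicit via the fundamental weight $\omega_\chi=\sum_i a_i\chi_i$, which the paper leaves as ``contradicts semistability''; this is fine (the inverse Cartan matrix has nonnegative entries, and $a_\chi>0$).

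For direction (II) your $\theta_0=\sum_\chi e_{-\chi}$ is exactly the paper's field $\theta=\bigoplus_\chi s_\chi$, and your principal--nilpotent mechanism is the right one: at a generic $x$ the element $\theta_0(x)$ lies in the \emph{unique} Borel $\mathrm{ad}(E_{B^1})_x$, which forces any $\theta_0$--invariant reduction $E_Q$ to satisfy $\mathrm{ad}(E_{B^1})\subset\mathrm{ad}(E_Q)$, hence $Q\supset B^1$ and the anti-dominant characters give line bundles of degree $\sum_i a_i\deg(L_{\chi_i})\ge 0$. But this is only \emph{semistability}, and your Cartan perturbation $\eta$ cannot upgrade it. Since $\eta$ takes values in $\mathfrak t\subset\mathfrak b^1$, the field $\theta=\theta_0+\eta$ still lies in $H^0(X,\mathrm{ad}(E_{B^1})\otimes TX)$; thus $E_{B^1}$ remains $\theta$--invariant, and whenever all $\deg(L_{\chi_i})=0$ on some simple factor (for example $E_G$ trivial) the associated anti-dominant line bundle has degree exactly $0$, so $(E_G,\theta)$ is strictly semistable. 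Concretely, for $G=\mathrm{SL}_2$ with $E_G$ trivial, $\theta=\left(\begin{smallmatrix} h & 0\\ s & -h\end{smallmatrix}\right)$ always preserves the degree--$0$ line $\mathcal O\cdot(0,1)$. Relatedly, your claim that ``regularity of $\theta(x)$ pins down the Borel'' is only true for regular \emph{nilpotent} elements; once $\eta(x)$ is regular in $\mathfrak t$, the element $\theta(x)$ becomes regular semisimple and sits in $|W|$ distinct Borels, so the uniqueness argument evaporates precisely when you invoke the perturbation. The paper argues stability instead by analogy with Hitchin's principal $\mathrm{SL}_2$ Higgs bundle and by a sketch that routes through the Harder--Narasimhan parabolic $E_P$; to repair your approach you would need to allow perturbations in the \emph{positive} simple-root directions as well, so that the field actually leaves $\mathrm{ad}(E_{B^1})$.
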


\begin{proof}
First assume that there is a simple root 
$$\chi\, :\, T\, \longrightarrow\, {\mathbb C}^*$$
with respect to $(B,\, T)$ such that
\begin{equation}\label{c1}
{\rm degree}(L_\chi)\, \geq\, 3\, .
\end{equation}
Let $B\, \subset\, P_\chi\, \subset\, G$ be the maximal parabolic subgroup corresponding to $\chi$. Let
$$
E_{P_{\chi}}\, :=\, E_B\times^B P_\chi\, \longrightarrow\, X
$$
be the holomorphic principal $P_\chi$--bundle obtained by extending the structure group
of $E_B$ using the inclusion of $B$ in $P_\chi$. Since $E_B$ is a reduction of structure
group of $E_G$, it follows that there is a
natural inclusion $$E_{P_{\chi}}\,=\, E_B\times^B P_\chi \, \hookrightarrow\,
E_B\times^B G \,=\, E_G\, ,$$ meaning $E_{P_{\chi}}$ is a
holomorphic reduction of structure group of $E_G$ to the subgroup $P_\chi\, \subset\, G$.

The quotient vector bundle $\text{ad}(E_G)/\text{ad}(E_{P_{\chi}})$ splits into a direct sum
of line bundles \cite[p.~122, Th\'eor\`eme 1.1]{Gr}. But we need a representation theoretic
description of such a decomposition. For that consider the $T$--module ${\mathfrak g}/\text{Lie}(P_\chi)$;
note that since $T\, \subset\, P_\chi$, the adjoint action of $T$ on ${\mathfrak g}$ preserves
the Lie algebra $\text{Lie}(P_\chi)$. Express $T$--module ${\mathfrak g}/\text{Lie}(P_\chi)$ as a direct
sum of one-dimensional $T$--modules
\begin{equation}\label{dl}
{\mathfrak g}/\text{Lie}(P_\chi)\,=\,\bigoplus_{\mu\in S} {\mathbb L}_\mu\, ,
\end{equation}
where $S$ is a collection of characters of $S$ (there may be repetitions of some
characters), and ${\mathbb L}_\mu$ is the one-dimensional $T$--module on which $T$ acts via $\mu$.
Let
$$
L^\mu\, :=\, E_T\times^\mu {\mathbb L}_\mu\, \longrightarrow\, X
$$
be the holomorphic line bundle associated to the principal $T$--bundle $E_T$ for the $T$--module ${\mathbb L}_\mu$.
So, from \eqref{dl} we have a decomposition
\begin{equation}\label{dl2}
\text{ad}(E_G)/\text{ad}(E_{P_{\chi}})\,=\, \bigoplus_{\mu\in S}L^\mu\, .
\end{equation}

From \eqref{c1} it follows that the
degree of every direct summand $L^\mu$ in \eqref{dl2} is bounded above
by $-3$. Hence we have
$$
H^0(X,\, L^\mu\otimes TX)\,=\, 0\, .
$$
Consequently, from \eqref{dl2} it follows that
$$
H^0(X,\, (\text{ad}(E_G)/\text{ad}(E_{P_{\chi}}))\otimes TX)\,=\, 0\, .
$$
Therefore, for every co-Higgs field $\theta$ on $E_G$, we have
$$
\theta\, \in\, H^0(X,\, \text{ad}(E_{P_{\chi}})\otimes TX)\, .
$$
This implies that the above reduction
$$
E_{P_{\chi}}\,\subset\, E_G
$$
contradicts the condition required for semistability of $(E_G,\, \theta)$.

We will now prove that if ${\rm degree}(L_\chi)\, \leq\, 2$
for every simple root $\chi$ with respect to $(B,\, T)$, then
there is a co-Higgs field
$\theta\, \in\, H^0(X,\, {\rm ad}(E_G)\otimes TX)$ such that $(E_G,\, \theta)$ is stable. 

So assume that
\begin{equation}\label{c2}
{\rm degree}(L_\chi)\, \leq\, 2
\end{equation}
for every simple root $\chi$ with respect to $(B,\, T)$. The condition in \eqref{c2} implies that
\begin{equation}\label{d2}
H^0(X,\, L^*_\chi\otimes TX) \, \not=\, 0\, .
\end{equation}

Note that on $X\,=\, {\mathbb P}^1$, the Harder--Narasimhan filtration of any
holomorphic vector bundle splits holomorphically. Indeed, any semistable vector bundle
on $X$ of rank $a$ and slope $b$ is isomorphic to ${\mathcal O}_{{\mathbb P}^1}(b)^{\oplus a}$.
If $b'\, \geq\, b$, then
$$
H^1({\mathbb P}^1,\, \text{Hom}({\mathcal O}_{{\mathbb P}^1}(b),\,
{\mathcal O}_{{\mathbb P}^1}(b')))\,=\, H^1({\mathbb P}^1,\, {\mathcal O}_{{\mathbb P}^1}(b'-b))
\,=\, 0\, .
$$
Consequently, any short exact sequence of holomorphic vector bundles of the form
\begin{equation}\label{es}
0\, \longrightarrow\, {\mathcal O}_{{\mathbb P}^1}(b')^{\oplus a'}\, \longrightarrow\, F
\, \longrightarrow\, {\mathcal O}_{{\mathbb P}^1}(b)^{\oplus a} \, \longrightarrow\, 0
\end{equation}
splits holomorphically. From this it follows immediately that the Harder--Narasimhan filtration of any
holomorphic vector bundle on $X$ splits holomorphically. In particular, the
Harder--Narasimhan filtration of $\text{ad}(E_G)$ splits holomorphically. The adjoint vector bundle
$\text{ad}(E_P)$ for $E_P$ (in \eqref{re}) is the part of the Harder--Narasimhan filtration of $\text{ad}(E_G)$
for slope zero \cite[p.~216, Lemma 2.11]{AB}. So a splitting of the
Harder--Narasimhan filtration of $\text{ad}(E_G)$ produces a splitting of $\text{ad}(E_P)$ as a direct sum
of semistable vector bundles (of different slopes).

Let $\mathfrak b$ denote the Lie algebras of $B$. Fix a $T$--submodule ${\mathbb W}\, \subset\, \mathfrak b$ such that
\begin{equation}\label{es3}
{\mathfrak b}\,=\, {\mathbb W}\oplus \left(\bigoplus_{\chi\in S_B} {\mathbb L}_\chi\right)\, ,
\end{equation}
where $S_B$ is the set of simple roots with respect to $(B,\, T)$, and ${\mathbb L}_\chi$, as before, is the
submodule of ${\mathfrak b}$ corresponding to $\chi$. Consider the holomorphic vector
bundles on $X$ associated to the holomorphic principal
$T$--bundle $E_T$ for the $T$--modules in \eqref{es3}. Let $W\,:=\, E_T\times^T {\mathbb W}$ be the holomorphic vector bundle
associated to $E_T$ for the $T$--module $\mathbb W$. As before, the holomorphic line bundle on $X$ associated to $E_T$ for the
$T$--module ${\mathbb L}_\chi$ will be denoted by $L_\chi$. Now from \eqref{es3} we have the following:
$$
\text{ad}(E_B)\,=\, W\oplus \left(\bigoplus_{\chi\in S_B} L_\chi\right)\, .
$$

We have $\text{degree}(L_\chi)\, \geq\, 0$ for every $\chi\,\in\, S_B$ (see \eqref{at6}),
so
$$
\text{degree}(L_\chi\otimes TX)\,\,=\,\text{degree}(L_\chi)+2 \, .
$$

Let $B^1$ be the opposite Borel for $(B,\, T)$; so $B^1$ is the unique Borel subgroup of $G$ such that
$B\bigcap B^1\,=\, T$. Let ${\mathfrak b}^1$ denote the Lie algebra of $B^1$.
A $G$--invariant nondegenerate symmetric bilinear form on $\mathfrak g$ produces an isomorphism of ${\mathfrak b}^1$
with ${\mathfrak b}^*$. Therefore, from \eqref{es3} we have an isomorphism of $T$--modules
\begin{equation}\label{es4}
{\mathfrak b}^1\,=\, {\mathbb W}^*\oplus \left(\bigoplus_{\chi\in S_B} {\mathbb L}^*_\chi\right)\, .
\end{equation}

Let $E_{B^1}\, :=\, E_T\times^T B^1\, \longrightarrow\, X$ be the holomorphic principal $B^1$--bundle obtained by
extending the structure group of the principal $T$--bundle
$E_T$ using the inclusion of $T$ in $B^1$. Since $E_T$ is a reduction of structure group
of $E_G$, it follows immediately that $E_{B^1}$ is a reduction of structure group of $E_G$ to the subgroup $B^1\, \subset \, G$.
In particular, $\text{ad}(E_{B^1})$ is a holomorphic subbundle of $\text{ad}(E_G)$.
Consider the holomorphic vector bundles on $X$ associated to the holomorphic principal
$T$--bundle $E_T$ for the $T$--modules in \eqref{es4}. From \eqref{es4} we have the following:
\begin{equation}\label{es5}
\text{ad}(E_{B^1})\,=\, W^*\oplus \left(\bigoplus_{\chi\in S_B} L^*_\chi\right)\, .
\end{equation}

For every $\chi\, \in\, S_B$, fix a nonzero section
$$
0\, \not=\, s_\chi\, \in\, H^0(X,\, L^*_\chi\otimes TX)\, ,
$$
which exists by \eqref{d2}. Now using \eqref{es5}, we have
$$
\bigoplus_{\chi\in S_B}s_\chi\, \in\, H^0(X,\, \text{ad}(E_{B^1})\otimes TX)
\, \subset\, H^0(X,\, \text{ad}(E_G)\otimes TX)\, .
$$
Consequently,
$$\theta\, :=\, \bigoplus_{\chi\in S_B} s_\chi$$
is a co-Higgs field on $E_G$.

We will prove that the $G$-co-Higgs bundle $(E_G,\, \theta)$ constructed above is stable.
This can be found in \cite{HiG}. More precisely, $(E_G,\, \theta)$ is stable for the same
reason as the $G$-Higgs bundle in \cite[p.~456, (5.2)]{HiG} is stable.

We also give an alternative argument for the stability of $(E_G,\, \theta)$. Let $E_{P'}$ be a reduction 
to another parabolic $P'$, and suppose that $\theta$ is a section of ${\rm 
ad}(E_{P'})\otimes TX$. The bundle of Lie algebras ${\rm ad}(E_{P'})$ is a sub-algebra 
bundle of ${\rm ad}(E_{G})$; it has a reduction ${\rm ad}(E_{B'})$ to a Borel, which contains 
a trivial bundle ${\mathfrak t} (E_{B'})$ corresponding to the torus of $B'$. Now we see how 
this interacts with the $P$-bundle $E_P$ given by the Harder-Narasimhan reduction; under 
projection to the quotient of Lie algebra bundles $E_P(\mathfrak g/\mathfrak p)$ (a sum of 
negative line subbundles) ${\mathfrak t} (E_{B'})$ maps to zero, and so lies in ${\rm ad}(E_{P})$; 
in addition it projects with a trivial kernel to the torus subbundle of ${\rm ad}(E_{B})$, the 
Borel reduction of ${\rm ad}(E_P)$. The bundle ${\mathfrak t} (E_{B'})$ then acts non-trivially on 
the element $\theta$, ensuring that each individual simple root space of $S_B$ lies in 
${\rm ad}(E_{P'})$, mod elements of $ad(E_P)$. Taking commutators, the root spaces of ${\rm ad}(E_{B'})$ 
project surjectively to the negative bundle $E_P(\mathfrak g/\mathfrak p)$, and so the anti 
dominant characters of $P'$ have to yield positive line bundles.
\end{proof}

\subsection{An example}

Let $\Omega_0$ be the standard symplectic form on ${\mathbb C}^{2r}$. Let $G\, =\, \text{Sp}(2r,{\mathbb C})$
denote the group of all linear automorphisms of ${\mathbb C}^{2r}$ preserving $\Omega_0$. Giving a holomorphic
principal $G$--bundle $E_G$ on $X$ is equivalent to giving a pair $(V,\, \varphi)$, where $V$ is a
holomorphic vector bundle on $X$ of rank $2r$, and
$$
\varphi\, :\, V\, \otimes V\, \longrightarrow\, {\mathcal O}_X
$$
is a fiber-wise nondegenerate skew-symmetric bilinear form. The Harder--Narasimhan reduction
$E_P\, \subset\, E_G$ of $E_G$ is simply the Harder-Narasimhan filtration
$$
0\, =\, V_0\, \subset\, V_1\, \subset\, V_2 \, \subset\, \cdots\, \subset\, V_{\ell-1} \, \subset\, V_\ell\,=\, V
$$
of the corresponding vector bundle $V$. Note that the annihilator, for the form $\varphi$, of the subbundle $V_i$
is $V_{\ell-i}$. Indeed, this follows immediately from the defining properties of a Harder-Narasimhan filtration
and the fact that the dual of a semistable vector bundle is semistable.

A holomorphic reduction of structure group $E_G$ to a maximal torus $T\, \subset\, G$ is a holomorphic filtration of 
subbundles
$$
0\, =\, W_0\, \subset\, W_1\, \subset\, W_2 \, \subset\, \cdots\, \subset\, W_{2r-1} \, \subset\, W_{2r}\,=\, V
$$
such that for every $0\, \leq\, i\, \leq\, 2r$, the subbundle $W_{2r-i}$ is the annihilator, for the form $\varphi$, of the
subbundle $W_i$. Note that the rank of $W_i$ is $i$, and $W_r$ is a Lagrangian subbundle of $V$.

Let $E_G$ be a holomorphic principal $G$--bundle on $X\, =\, {\mathbb P}^1$. Let
$(V,\, \varphi)$ be the corresponding vector bundle with symplectic form. Then we have a holomorphic decomposition into a
direct sum of holomorphic line bundles
\begin{equation}\label{g1}
V\, =\, \bigoplus_{i=1}^{2r} L_i
\end{equation}
such that
\begin{enumerate}
\item $\text{degree}(L_i)\, \geq\, \text{degree}(L_j)$ if $i\, \leq\, j$,

\item $L^*_i\,=\, L_{2r-i+1}$ for all $1\, \leq\, i\, \leq \, r$, and

\item for every $1\, \leq\, i\, <\, 2r$, the annihilator of the subbundle
$$
\bigoplus_{k=1}^i L_k \, \subset\, \bigoplus_{j=1}^{2r} L_j\,=\, V\, ,
$$
for the symplectic form $\varphi$, is the subbundle $\bigoplus_{k=1}^{2r-i} L_k$.
\end{enumerate}
The decomposition in \eqref{g1} is constructed by first splitting the Harder-Narasimhan filtration for $V$,
and then splitting each semistable bundle in the direct summand as a direct sum of holomorphic line bundles.

The criterion in Theorem \ref{thm1} says that there is a co-Higgs field
$$
\theta\, \in\, H^0(X,\, {\rm ad}(E_G)\otimes TX)
$$
such that $(E_G,\, \theta)$ is stable if and only if $\text{degree}(L_i)- \text{degree}(L_{i+1})\, \leq\, 2$
for all $1\, \leq\, i\, \leq\, r$.

Note that if $\text{degree}(L_i)- \text{degree}(L_{i+1})\, \leq\, 2$
for all $1\, \leq\, i\, \leq\, r$, then $\text{degree}(L_i)- \text{degree}(L_{i+1})\, \leq\, 2$
for all $1\, \leq\, i\, \leq\, 2r-1$, because $\text{degree}(L_{i+r})- \text{degree}(L_{i+r+1})\,
=\, \text{degree}(L_{r-i})- \text{degree}(L_{r-i+1})$ for all $1\, \leq\, i\, \leq\, r-1$.

\section{Strata for moduli}\label{SectStrata}

Apart from the case of $G=\mbox{SL}(2,\mathbb C)$ with odd degree, for which there exists a global description of the moduli space as a universal elliptic curve (cf. \cite[Section 7]{R0}), a neat global description of moduli for all $G$ remains to be found. The above arguments yield a description of strata in the moduli, the strata corresponding to the Harder-Narasimhan type or equivalently the splitting type of the bundle.

The Harder-Narasimhan type of the bundle corresponds to a dominant co-character $\mu: {\mathbb C}^*\longrightarrow T$, the maximal torus of the group. By taking a logarithm, this is in turn an element $M$ of the dual weight lattice in the Lie algebra $\mathfrak t$ of the torus, lying in the positive Weyl chamber: $\chi(M)\geq 0$ for all simple positive roots $\chi$ . The stability condition says that for all simple positive roots $\chi$, $\chi(M)\leq 2$. The set of cocharacters that this yields could be unbounded if the Lie algebra of the group has a non-trivial center, but we are considering bundles with a fixed topological degree; this cuts down the set of cocharacters to a finite set $\mathcal M$, and so we have a finite set of strata ${\mathcal S}_{ M},M\in \mathcal M$ in the co-Higgs moduli.

For a given cocharacter $M$, we have a finite dimensional vector space $V_M$ of possible co-Higgs fields in $H^0({\rm ad}(E_P)\otimes TX)= H^0({\rm ad}(E_P)\otimes \CO_X(2))$. Splitting the Lie algebra of $G$ into a torus $\mathfrak t$ and a sum of root spaces $R_\chi$, we obtain
$$ H^0({\rm ad}(E_P)\otimes TX)= {\mathbb C}^{3 rk(G)} + \sum_{\chi \in {\mathcal R}}H^0(\PP^1, \CO(\chi(M)+ 2).$$
Since $\sum_{\chi \in {\mathcal R}}\chi(M) = 0$, and $H^0(\PP^1, \CO(\chi(M)+ 2) = {\mathbb C}^{\chi(M)+3}$ if $ \chi(M)\geq -3$, $0$ otherwise, we find for the dimension of $V_M$:
$$ 3 \dim(G) + \sum_{\chi(M)>3}( \chi(M) +3)$$

The stability condition, as we have seen, is (Zariski) open, and the example in the previous section tells us that it is non-empty. We thus have a family $U_M\subset V_M$ of stable co-Higgs fields of the same dimension as $V_M$. Describing the set with any precision seems rather complicated.

To get the stratum ${\mathcal S}_{ M}$, one must quotient by the group $Aut_M$ of automorphisms of the bundle. This is modeled on the parabolic group $P$ corresponding to $M$; the Levi factor is the same as that of $P$, but for the unipotent piece, one replaces the complex line ${\mathbb{C}}_\chi$ corresponding to each root space by the space of sections $H^0(\PP^1, \CO(\chi(M))$. The Lie algebra $aut_M$ of $Aut_M$ is given by (as a vector space) by a complex vector space of dimension
$$rk(G) + \sum_{\chi(M)>-1} (\chi(M)+1) = \dim(G) + \sum_{\chi(M)> 1} (\chi(M)-1)$$
The group $Aut_M$ will act freely on a generic set of $V_M$: indeed, in $V_M$ one can choose nonzero entries in the root spaces, and one also has three components for each generator of the torus that one can choose freely. Choosing one of the torus components to be regular semisimple reduces one on the Lie algebra level to a torus, and then adding in some non-zero root space components gives a trivial stabilizer.

The quotient ${\mathcal S}_{ M} =U_M/Aut_M$ will then have dimension given by the difference of dimensions, 
$$ 2 \dim(G) -2\#\{\chi| \chi(M)>3\} - \sum_{1< \chi(M)\leq 3} (\chi(M)-1).$$
For the case of bundles of trivial degree, we have a generic stratum (corresponding to $M=0$) of dimension $2\dim(G)$.


\end{document}